\newtheorem{theorem}[subsection]{Theorem}
\newtheorem{corollary}[subsection]{Corollary}
\newtheorem{remark}[subsection]{Remark}
\newtheorem{example}[subsection]{Example}
\newtheorem{lemma}[subsection]{Lemma}
\newtheorem{question}[subsection]{Question}
\title{On continuous polynomials of the Macías space}
\author{Jhixon Macías}
\address{Jhixon Macías\newline
University of Puerto Rico at Mayaguez, Mayaguez, PR, USA\newline
United States of America}
\email{jhixon.macias@upr.edu}
\keywords{Polynomials, Golomb’s topology, Macías topology}
\subjclass[2020]{54D05; 11A41; 11B01}
\begin{document}

\begin{abstract}
Let $\mathbb{N}$ be the set of natural numbers. The Macías space $M(\mathbb{N})$ is the topological space $(\mathbb{N},\tau_M)$ where $\tau_M$ is generated by the collection of sets $\sigma_n := \{ m \in \mathbb{N} : \gcd(n, m) = 1 \}$.  In this paper, we characterize the continuity of polynomials over $ M(\mathbb{N})$ and prove that the only continuous polynomials are monomials.
\end{abstract}

\maketitle
\section{Introduction}\label{section1}

In 1953, M. Brown introduced a topology $\tau_G$ on the set of natural numbers $\mathbb{N}$, which is generated by the collection of arithmetic progressions $a+b\mathbb{N}$ where $a\in \mathbb{N}$ and $b\in\mathbb{N}\cup\{0\}$ such that $\gcd(a,b)=1$ (here the symbol $\gcd(a,b)$ means the greatest common divisor of $a$ and $b$). In \cite[Counterexample 60]{steen1978counterexamples} the topology $\tau_G$ is called the \textit{relatively prime integer topology}.  It was not until 1959 that the topology introduced by Brown was popularized by S. Golomb (now known as Golomb's topology) who in \cite{golomb1959connected} proves that Dirichlet's theorem on arithmetic progressions is equivalent to the set of prime numbers  being dense in the topological space $(\mathbb{N},\tau_G)$. 


Let $n \in \mathbb{N} \cup \{0\}$ and let $f:\mathbb{N} \to \mathbb{N}$ be a polynomial of the form  
\begin{equation}\label{eq2}  
f(x) = \sum_{k=0}^{n} a_k x^k \quad \text{where} \quad a_0, a_1, a_2, \dots, a_{n-1}, a_n \in \mathbb{N} \cup \{0\}.  
\end{equation}  

In \cite[Theorem 4.3]{szczuka2013darboux}, P. Szczuka proves that polynomials of the form (\ref{eq2}) with $a_n \in \mathbb{N}$ are continuous in $(\mathbb{N},\tau_G)$ if and only if $a_0 = 0$. Motivated by P. Szczuka's work, in this paper we study the continuity of polynomials of the form (\ref{eq2}) over the Macías topological space $M(\mathbb{N})$, a topology recently introduced and studied in \cite{jhixon2024,MACIAS2024109070}  which we will define in the next section. We prove that the only continuous polynomials in $M(\mathbb{N})$ are the constant polynomials and those of the form $ax^n$ with $a, n \in \mathbb{N}$ (see Theorem \ref{thcontinuo}).  

\section{The Macías space}\label{section2}

The Macías space $M(\mathbb{N})$ is the topological space $(\mathbb{N},\tau_M)$ where $\tau_M$ is generated by the collection of sets $\sigma_n := \{ m \in \mathbb{N} : \gcd(n, m) = 1 \}$. Note that \begin{equation*}
    \sigma_n=\bigcup_{\substack{1\leq m\leq n\\ \gcd(m,n)=1}}n\left(\mathbb{N}\cup \{0\}\right)+m,
\end{equation*}
since for every integer $x$ we have that $\gcd(n,m)=\gcd(n,nx+m)$; see \cite[Theorem 1.9]{niven1991introduction}. It is easily deduced from here that $\sigma_n$ is infinite for every positive integer $n$ and that the topology $\tau_M$ is strictly coarser than Golomb topology. The Macías space does not satisfy the $\mathrm{T}_0$ separation axiom, satisfies (vacuously) the $\mathrm{T}_4$ separation axiom, is  connected, locally connected, path-connected, and pseudocompact.   On the other hand, for $n,m\in\mathbb{N}$, it holds that $\sigma_{nm}=\sigma_{n}\cap\sigma_m$ and 
$\textbf{cl}_{M(\mathbb{N})}(\{nm\})=\textbf{cl}_{M(\mathbb{N})}(\{n\})\cap \textbf{cl}_{M(\mathbb{N})}(\{m\})$ where $\textbf{cl}_{M(\mathbb{N})}(\{n\})$ denotes the closure of the singleton set $\{n\}$ in the topological space $M(\mathbb{N})$ (see \cite{MACIAS2024109070}). Furthermore, in \cite{jhixon2024} (using the topology $\tau_M$), a topological proof of the infinitude of prime numbers is presented (different from the proofs of Furstenberg \cite{furstenberg1955infinitude} and Golomb \cite{golomb1959connected}). In the same work, the infinitude of any non-empty subset of prime numbers is characterized in the following sense: 

\begin{lemma}[\cite{jhixon2024}]\label{mainlema}
If $A$ is a non-empty subset of prime numbers, then $A$ is infinite if and only if $A$ is dense in $M(\mathbb{N})$. 
\end{lemma}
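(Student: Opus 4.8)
The plan is to reduce the topological statement to a statement about the sets $\sigma_k$ and then invoke the elementary fact that a natural number has only finitely many prime divisors. The first step is to record that, because $\sigma_{nm}=\sigma_n\cap\sigma_m$, every nonempty finite intersection of subbasic sets is again of the form $\sigma_k$ for a single $k\in\mathbb{N}$ (with the convention $\sigma_1=\mathbb{N}$ covering the empty intersection). Hence every nonempty open set of $M(\mathbb{N})$ contains some $\sigma_k$, and therefore a set $D\subseteq\mathbb{N}$ is dense in $M(\mathbb{N})$ if and only if $D\cap\sigma_k\neq\varnothing$ for every $k\in\mathbb{N}$.

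For the implication ``$A$ infinite $\Rightarrow$ $A$ dense'', I would fix an arbitrary $k\in\mathbb{N}$. Since $k$ has only finitely many prime divisors and $A$ is infinite, there is a prime $p\in A$ with $p\nmid k$; for such $p$ we have $\gcd(k,p)=1$, i.e. $p\in\sigma_k$. Thus $A\cap\sigma_k\neq\varnothing$ for every $k$, and by the reduction above $A$ is dense in $M(\mathbb{N})$.

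For the converse I would argue by contraposition: if $A=\{p_1,\dots,p_r\}$ is finite, set $N=p_1p_2\cdots p_r$. Then $\gcd(N,p_i)=p_i\neq 1$ for each $i$, so $A\cap\sigma_N=\varnothing$, while $\sigma_N$ is a nonempty (indeed infinite) open set; hence $A$ is not dense. I do not anticipate a real obstacle in this argument: the only point requiring attention is the claim that the nonempty basic open sets are exactly the sets $\sigma_k$, and this is immediate from the identity $\sigma_{nm}=\sigma_n\cap\sigma_m$ recorded above, together with the fact that each $\sigma_k$ is nonempty.
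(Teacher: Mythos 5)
Your proposal is correct and follows essentially the same route as the paper: in the forward direction you pick a prime of $A$ coprime to $k$ (the paper picks one larger than $n$, which is the same idea), and in the converse you test density against $\sigma_{p_1\cdots p_r}$ exactly as the paper does, merely phrased contrapositively. The explicit reduction of density to meeting every $\sigma_k$ via $\sigma_{nm}=\sigma_n\cap\sigma_m$ is a nice touch but not a substantive difference.
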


\begin{proof}
Suppose that  $A$ is infinite. Then, for any positive integer $n>1$, we can choose a prime $p\in A$ such that $p>n$, and consequently, $p\in\sigma_n$ since $\gcd(n,p)=1$. Therefore, $A$ is dense in $M(\mathbb{N})$. On the other hand, assume that $A$ is dense in $M(\mathbb{N})$. Let $\{p_1,p_2,\dots, p_k\}\subset A$ be a finite collection of prime numbers and consider the  non-empty basic element $\sigma_x$ where $x=p_1\cdot p_2\cdots p_k$. Note that none of the $p_i$ belong to $\sigma_x$, but since $A$ is dense in $M(\mathbb{N})$, there must be another prime number $q$, different from each $p_i$, such that $q\in\sigma_x$. Consequently, $A$ is infinite. 
\end{proof}




The Lemma \ref{mainlema}, together with the following theorems, will be very useful in Section \ref{section3}. 

\begin{theorem}[\cite{dirichlet1837beweis}]\label{thdirichlet}
Let $n,m\in\mathbb{N}$ such that $\gcd(n,m)=1$. Then,   the set $n(\mathbb{N}\cup\{0\})+m$  contains an infinite number of prime numbers.
\end{theorem}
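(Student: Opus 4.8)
The theorem is Dirichlet's 1837 theorem on primes in arithmetic progressions, a deep classical result which this paper (quite reasonably) quotes as a black box; I will nonetheless sketch the standard analytic proof. Fix $n,m\in\mathbb{N}$ with $\gcd(n,m)=1$, and write $G=(\mathbb{Z}/n\mathbb{Z})^{\times}$, a finite abelian group of order $\varphi(n)$. The first ingredient is the theory of Dirichlet characters $\bmod\ n$: group homomorphisms $\chi\colon G\to\mathbb{C}^{\times}$, extended to all of $\mathbb{N}$ by $\chi(k)=0$ whenever $\gcd(k,n)>1$, with $\chi_0$ the principal character. The key elementary fact is the orthogonality relation: for $\gcd(k,n)=1$, the average $\frac{1}{\varphi(n)}\sum_{\chi}\overline{\chi(m)}\,\chi(k)$ equals $1$ if $k\equiv m\pmod n$ and $0$ otherwise. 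This is what lets us sieve out a single residue class.

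Next I would introduce the Dirichlet $L$-functions $L(s,\chi)=\sum_{k\ge 1}\chi(k)k^{-s}$, which converge absolutely for $\Re s>1$ and there admit the Euler product $L(s,\chi)=\prod_{p}\bigl(1-\chi(p)p^{-s}\bigr)^{-1}$. Taking logarithms and expanding the geometric series gives $\log L(s,\chi)=\sum_{p}\chi(p)p^{-s}+E(s,\chi)$, where $E(s,\chi)$ (the contribution of prime powers $p^{j}$, $j\ge 2$) stays bounded as $s\to 1^{+}$. Multiplying by $\overline{\chi(m)}$, summing over all $\chi$, dividing by $\varphi(n)$, and invoking orthogonality yields
\[
\sum_{\substack{p\ \text{prime}\\ p\equiv m\ (\mathrm{mod}\ n)}}\frac{1}{p^{s}}=\frac{1}{\varphi(n)}\sum_{\chi}\overline{\chi(m)}\,\log L(s,\chi)+O(1)\qquad\text{as }s\to 1^{+}.
\]
It therefore suffices to prove the right-hand side tends to $+\infty$. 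The principal character produces the main term: since $L(s,\chi_0)=\zeta(s)\prod_{p\mid n}(1-p^{-s})$ and $\zeta(s)$ has a simple pole at $s=1$, we get $\log L(s,\chi_0)\to+\infty$. For a non-principal $\chi$, the partial sums of $\chi$ over any interval are bounded by $\varphi(n)$, so by partial summation $L(s,\chi)$ extends holomorphically to $\Re s>0$; hence $\log L(s,\chi)$ stays bounded near $s=1$ \emph{provided} $L(1,\chi)\ne 0$.

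The main obstacle — indeed the whole content of the theorem — is exactly this non-vanishing $L(1,\chi)\ne 0$ for every non-principal $\chi\bmod n$. I would split it into two cases. For complex characters ($\chi\ne\overline{\chi}$): if $L(1,\chi)=0$ then also $L(1,\overline{\chi})=0$, and one checks that $\prod_{\chi}L(s,\chi)$ would then vanish at $s=1$, contradicting that this product is a Dirichlet series with non-negative coefficients which is $\ge 1$ for real $s>1$ yet has a genuine pole there coming from $\chi_0$. For real (quadratic) non-principal characters this argument collapses, and a separate, more delicate argument is required — e.g.\ via Dirichlet's class number formula, or the classical device of examining $\zeta(s)L(s,\chi)$, whose Dirichlet coefficients are non-negative, and ruling out a zero at $s=1$ by comparing its behaviour on the segment $(1/2,1]$. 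Granting $L(1,\chi)\ne 0$ in all cases, the displayed identity forces $\sum_{p\equiv m\,(n)}p^{-s}\to\infty$ as $s\to 1^{+}$, so there are infinitely many primes $p\equiv m\pmod n$; equivalently, $n(\mathbb{N}\cup\{0\})+m$ contains infinitely many primes.
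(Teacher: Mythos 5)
Your sketch is a correct outline of the classical analytic proof of Dirichlet's theorem, but the paper does not actually prove this statement: its ``proof'' is a single sentence pointing the reader to the elementary proof in Apostol, Chapter~7. So the comparison is really between two well-known routes to the same result. Apostol's treatment is elementary in the technical sense that it avoids analytic continuation and the behaviour of $L(s,\chi)$ as $s\to 1^{+}$, working instead with finite partial sums such as $\sum_{p\le x}\chi(p)\log p/p$ and Shapiro's Tauberian-type lemmas; your route is Dirichlet's original one through the pole of $L(s,\chi_0)$ at $s=1$ and the divergence of $\sum_{p\equiv m\,(n)}p^{-s}$. Both approaches share the same irreducible core, namely the orthogonality of characters and the non-vanishing $L(1,\chi)\neq 0$ for non-principal $\chi$, and you correctly identify that the real (quadratic) character case is the genuinely hard step. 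Be aware, though, that your text only gestures at that step (``via Dirichlet's class number formula, or \dots ruling out a zero at $s=1$''); as written it is an honest sketch rather than a complete proof, which is acceptable here only because the statement is a cited classical theorem rather than a result the paper claims to establish. One small point of hygiene: $\log L(s,\chi)$ for complex $\chi$ should be defined via the Euler product, i.e.\ as $\sum_{p}\sum_{j\ge 1}\chi(p)^{j}p^{-js}/j$, rather than as a branch of the logarithm of an a priori complex number; with that convention your estimate on the error term $E(s,\chi)$ is correct.
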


\begin{proof}
The reader can check an \textit{elementary proof} in \cite[Chapter 7]{apostol2013introduction}.
\end{proof}

\begin{theorem}[\cite{schur1912uber}]\label{schurtheorem}
Let $f(x)$ be a non-constant polynomial of the form (\ref{eq2}). Then the set of prime numbers dividing some element of the set $\{f(n)\neq 0:n\in\mathbb{N}\}$ is infinite. 
\end{theorem}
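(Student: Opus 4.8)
The plan is to adapt the classical Euclid argument to the values of $f$, distinguishing two cases according to the constant term $a_0$ in (\ref{eq2}). First observe that, the coefficients being non-negative and $f$ non-constant (so its leading coefficient $a_n$, with $n\ge 1$, satisfies $a_n\ge 1$), every $f(m)$ with $m\in\mathbb N$ is positive; hence the set $\{f(n)\ne 0:n\in\mathbb N\}$ is simply $\{f(n):n\in\mathbb N\}$, and it suffices to show that infinitely many primes divide some value of $f$.

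I would first dispose of the case $a_0=0$. Then $f(x)=x\,g(x)$ for a polynomial $g$ with non-negative integer coefficients, so for \emph{every} prime $p$ we have $p\mid f(p)$ and $f(p)\ge p>0$; since there are infinitely many primes, in this case every prime divides a value of $f$, and we are done.

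The substance is the case $a_0\ge 1$, which I would treat by contradiction. Suppose only the primes $p_1,\dots,p_k$ divide some value of $f$, and set $Q:=a_0\,p_1p_2\cdots p_k$. Evaluating $f$ at multiples of $Q$ and factoring $a_0$ out of every term of positive degree, I would write $f(Qt)=a_0\,h(t)$ for $t\in\mathbb N$, where $h$ has integer coefficients, constant term $1$, and every remaining coefficient divisible by $p_1\cdots p_k$; in particular $h(t)\equiv 1\pmod{p_i}$ for each $i$. The key observation is that $h$ is non-constant with positive leading coefficient, so $h(t)\to\infty$: picking $t$ with $h(t)\ge 2$ and any prime $q\mid h(t)$, the congruences force $q\notin\{p_1,\dots,p_k\}$, while $q\mid h(t)\mid f(Qt)$ and $f(Qt)=a_0h(t)>0$. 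Thus $q$ divides a value of $f$ yet is not on our list, a contradiction; hence the set of such primes is infinite.

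I expect the only delicate step to be the choice of the multiplier $Q$: it must make $f(Qt)/a_0$ congruent to $1$ modulo \emph{each} forbidden prime while $f(Qt)$ still grows without bound. Building the factor $a_0$ into $Q$ is exactly what neutralizes any $p_i$ dividing $a_0$, and this is precisely why the case $a_0=0$ has to be peeled off first — there the same construction would only reproduce divisors of $Q$ and yield no new prime. Non-constancy of $f$ enters only to guarantee the growth of $h$ that manufactures the new prime factor.
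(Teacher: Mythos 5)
Your proof is correct and complete. Note that the paper does not actually prove this statement: it is quoted as Schur's theorem and the reader is referred to the literature (Murty--Thain), so your self-contained argument is doing genuinely more work than the text it replaces. What you give is the classical Euclid-style argument: peeling off the easy case $a_0=0$ (where every prime $p$ divides $f(p)>0$), and otherwise substituting $x=Qt$ with $Q=a_0p_1\cdots p_k$ so that $f(Qt)=a_0h(t)$ with $h(0)=1$ and all higher coefficients of $h$ divisible by $p_1\cdots p_k$; the growth of $h$ then manufactures a prime outside the list. All the details check out: the divisibility $a_0\mid Q^j$ for $j\ge1$ makes $h$ integral, the higher coefficients $a_jQ^{j-1}(Q/a_0)$ are indeed multiples of $p_1\cdots p_k$, and non-constancy of $f$ with $a_n\ge1$ guarantees both that every value $f(m)$ is positive (so the set in question is all of $\{f(n):n\in\mathbb N\}$) and that $h(t)\to\infty$. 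The only cosmetic point is the degenerate case where the alleged finite set of primes is empty ($k=0$); your construction with $Q=a_0$ still produces a prime divisor of some $f(Qt)$, so the contradiction goes through unchanged, but it is worth saying explicitly. This is exactly the level of elementary argument the paper's framework calls for, and it could be inserted in place of the citation without changing anything else.
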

\begin{proof}
    The reader may consult a proof in \cite{murty2006primes}.
\end{proof}

\section{On continuous polynomials of the Macías space}\label{section3}

Naturally, the following question arises: Does \cite[Theorem 4.3]{szczuka2013darboux} hold in the space $M(\mathbb{N})$? The answer is no. Consider the following example.  

\begin{example}\label{ejem1}  
Consider the polynomial of the form (\ref{eq2}) given by $f(x) = x^2 + x = x(x+1)$. Note that $f(x) \equiv 0 \pmod{7}$ has solutions in the complete residue system modulo $7$, namely $x = 6,7$. Thus,  
\begin{equation*}  
W := f^{-1}(\sigma_7) = \bigcup_{i=1}^{5} 7(\mathbb{N} \cup \{0\}) + i.  
\end{equation*}  
Now, by Dirichlet's theorem on arithmetic progressions (Theorem \ref{thdirichlet}) and Lemma \ref{mainlema}, the set $W' := 7(\mathbb{N} \cup \{0\}) + 6$, which is disjoint from $W$, is dense in $M(\mathbb{N})$. Thus, if $W$ were open, then $W \cap W' \neq \emptyset$, which is a contradiction. Therefore, $f$ is not continuous in $M(\mathbb{N})$.  
\end{example}  

In general, continuity is also not guaranteed for polynomials of the form (\ref{eq2}) when $a_0 > 0$.  

\begin{example}\label{ejem2}  
Consider the polynomial of the form (\ref{eq2}) given by $f(x) = x^2 + 4x + 2$. Note that $f(x) \equiv 0 \pmod{7}$ has solutions in the complete residue system modulo $7$, namely $x = 1,2$. Thus,  
\begin{equation*}  
W := f^{-1}(\sigma_7) = \bigcup_{i=3}^{7} 7(\mathbb{N} \cup \{0\}) + i.  
\end{equation*}  
Now, by Dirichlet's theorem on arithmetic progressions (Theorem \ref{thdirichlet}) and Lemma \ref{mainlema}, the set  
\[ W' := 7(\mathbb{N} \cup \{0\}) + 1 \cup 7(\mathbb{N} \cup \{0\}) + 2, \]  
which is disjoint from $W$, is dense in $M(\mathbb{N})$. Thus, if $W$ were open, then $W \cap W' \neq \emptyset$, which is a contradiction. Therefore, $f$ is not continuous in $M(\mathbb{N})$.  
\end{example}  

The following theorem characterizes the continuity of polynomials of the form (\ref{eq2}) in $M(\mathbb{N})$.

\begin{theorem}\label{mainth}  
Let $f$ be a polynomial of the form (\ref{eq2}). Then, $f$ is continuous if and only if $f^{-1}(\sigma_p) = \sigma_p$, $f^{-1}(\sigma_p) = \mathbb{N}$, or $f^{-1}(\sigma_p) = \emptyset$ for every prime number $p$.  
\end{theorem}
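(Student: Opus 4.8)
The plan is to prove the two directions separately, with the forward direction being essentially trivial once one recalls that $\{\sigma_p : p \text{ prime}\}$ is a subbasis for $\tau_M$. Indeed, since every $\sigma_n = \bigcap_{p \mid n} \sigma_p$ and every basic open set is a finite intersection of such $\sigma_n$, the collection $\{\sigma_p : p \text{ prime}\}$ generates $\tau_M$. Hence $f$ is continuous if and only if $f^{-1}(\sigma_p)$ is open for every prime $p$; so the real content is to show that for a polynomial of the form (\ref{eq2}), the preimage $f^{-1}(\sigma_p)$ is open \emph{if and only if} it is one of the three sets $\sigma_p$, $\mathbb{N}$, or $\emptyset$. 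The ``if'' direction of this equivalence is immediate, since each of those three sets is open.

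For the ``only if'' direction, fix a prime $p$ and analyze $f^{-1}(\sigma_p)$ via congruences modulo $p$. Write $S_p := \{ r \in \{0,1,\dots,p-1\} : p \mid f(r) \}$ for the set of residues where $f$ vanishes mod $p$; since $\gcd(p,m)=\gcd(p, pk+m)$ and $f(pk+m) \equiv f(m) \pmod p$, we get
\begin{equation*}
f^{-1}(\sigma_p) = \bigcup_{\substack{1 \le m \le p \\ m \bmod p \,\notin\, S_p}} \bigl(p(\mathbb{N}\cup\{0\}) + m\bigr),
\end{equation*}
i.e. $f^{-1}(\sigma_p)$ is a union of residue classes mod $p$, determined by the complement of $S_p$. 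I would then argue that if $f^{-1}(\sigma_p)$ is open and is neither $\emptyset$ nor $\mathbb{N}$, it must equal $\sigma_p$. The key obstruction to openness comes from a residue class $p(\mathbb{N}\cup\{0\}) + m_0$ that lies \emph{outside} $f^{-1}(\sigma_p)$ with $\gcd(m_0, p) = 1$ (equivalently $m_0 \not\equiv 0$): by Dirichlet's theorem (Theorem \ref{thdirichlet}) this class contains infinitely many primes, so by Lemma \ref{mainlema} it is dense in $M(\mathbb{N})$; if $f^{-1}(\sigma_p)$ were open and nonempty it would meet this dense set, contradicting disjointness. Therefore no nonzero residue class can be excluded, which forces $f^{-1}(\sigma_p) \supseteq \sigma_p$. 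Combined with the reverse inclusion — which holds because $0 \in S_p$ would have to be checked, but in fact one shows that whenever $f^{-1}(\sigma_p)$ contains \emph{any} class it contains $\sigma_p$, and the only remaining class is $p(\mathbb{N}\cup\{0\})+p$, whose inclusion makes the set all of $\mathbb{N}$ — we conclude $f^{-1}(\sigma_p) \in \{\emptyset, \sigma_p, \mathbb{N}\}$.

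The main obstacle I anticipate is handling the residue class of $0$ (equivalently $m = p$) cleanly: Dirichlet's theorem gives no primes in $p(\mathbb{N}\cup\{0\})$, so density arguments do not directly control whether this class is included or excluded, and one must separately observe that including it (on top of all nonzero classes) yields $\mathbb{N}$, while the three allowed cases correspond exactly to $S_p = \{0,1,\dots,p-1\}$ (giving $\emptyset$), $S_p = \{0\}$ (giving $\sigma_p$), and $S_p = \emptyset$ (giving $\mathbb{N}$). A secondary point requiring care is the degenerate case where $f$ is constant, or where $p$ divides the leading coefficient so that $f \bmod p$ has lower degree or is identically constant mod $p$; these should be absorbed into the residue-class bookkeeping without difficulty, but they are worth stating explicitly so the argument is airtight for \emph{every} prime $p$ rather than merely for all but finitely many.
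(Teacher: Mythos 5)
Your proposal is correct and follows essentially the same route as the paper: both reduce continuity to openness of $f^{-1}(\sigma_p)$ for primes $p$ via the factorization $\sigma_k=\bigcap_{p\mid k}\sigma_p$, both use the congruence $f(pk+m)\equiv f(m)\pmod p$ to see that $f^{-1}(\sigma_p)$ is a union of residue classes mod $p$, and both derive the contradiction from the density (Dirichlet plus Lemma \ref{mainlema}) of any excluded nonzero residue class meeting the nonempty open preimage. The only difference is organizational — you rule out every excluded nonzero class at once and then let the zero class decide between $\sigma_p$ and $\mathbb{N}$, whereas the paper first argues that some nonzero root mod $p$ must exist — and your explicit attention to the class of $0$ and to degenerate primes is a sound, if minor, refinement.
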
  

\begin{proof}  
Suppose that $f^{-1}(\sigma_p) = \sigma_p$, $f^{-1}(\sigma_p) = \mathbb{N}$, or $f^{-1}(\sigma_p) = \emptyset$ for every prime number $p$. Consider a basic element $\sigma_k$. Clearly, if $k = 1$, then $f^{-1}(\sigma_k) = \mathbb{N}$. If $k > 1$, consider its prime factorization, say $k = p_1^{\alpha_1} p_2^{\alpha_2} \cdots p_t^{\alpha_t}$. Then,  
\begin{equation*}  
    f^{-1}(\sigma_k) = f^{-1} \left( \bigcap_{i=1}^{t} \sigma_{p_i} \right) = \bigcap_{i=1}^{t} f^{-1}(\sigma_{p_i}).  
\end{equation*}  
By assumption, $f^{-1}(\sigma_{p_i}) = \sigma_{p_i}$, $f^{-1}(\sigma_{p_i}) = \mathbb{N}$, or $f^{-1}(\sigma_{p_i}) = \emptyset$ for each $i \in \{1,2, \dots, t\}$, all of which are open sets. Thus, $f$ is continuous in $M(\mathbb{N})$.  

Conversely, suppose that $f$ is continuous in $M(\mathbb{N})$. Then,  
\begin{equation*}  
    f^{-1}(\sigma_p) = \{x \in \mathbb{N} : \gcd(f(x), p) = 1\} = \{x \in \mathbb{N} : f(x) \not\equiv 0 \pmod{p} \},  
\end{equation*}  
is an open set in $M(\mathbb{N})$. Now, suppose that for some prime number $p$, the set $f^{-1}(\sigma_p)$ is non-empty, different from $\sigma_p$, and different from $\mathbb{N}$.  Note that if $f(x)\not\equiv 0 \pmod{p}$ for all $x\in \{1,2,\dots, p-1\}$, then $\sigma_p\subsetneq f^{-1}(\sigma_p)$, which implies that $f^{-1}(\sigma_p)\cap p\mathbb{N}\neq\emptyset$. But this last statement implies that $p\mathbb{N}\subset f^{-1}(\sigma_p)$. Therefore, $\mathbb{N}=\sigma_p\cup p\mathbb{N}\subset f^{-1}(\sigma_p)\subset \mathbb{N}$, which is absurd because $f^{-1}(\sigma_p)\neq\mathbb{N}$.  Hence, there exists at least one $x \in \{1,2, \dots, p-1\}$ such that $f(x) \equiv 0 \pmod{p}$. Then, 
\begin{equation*}  
    f^{-1}(\sigma_p) \cap p(\mathbb{N} \cup \{0\}) + x = \emptyset \ \ \text{for some} \ \ x \in \{1,2, \dots, p-1\}.
\end{equation*}  
However, by Dirichlet's theorem on arithmetic progressions (Theorem \ref{thdirichlet}) and Lemma \ref{mainlema}, the set $p(\mathbb{N} \cup \{0\}) + x$ is dense in $M(\mathbb{N})$, and so $f^{-1}(\sigma_p) \cap p(\mathbb{N} \cup \{0\}) + x \neq \emptyset$ (a contradiction). Thus, $f^{-1}(\sigma_p) = \sigma_p$, $f^{-1}(\sigma_p) = \mathbb{N}$, or $f^{-1}(\sigma_p) = \emptyset$ for every prime number $p$.  
\end{proof}  

An immediate application of Theorem \ref{mainth} is to identify families of continuous and non-continuous polynomials in $M(\mathbb{N})$.


\begin{lemma}\label{lem4}  
Polynomials \( f:\mathbb{N}\to\mathbb{N} \) of the form \( f(x)=a \), where \( a \) is a fixed positive integer (constant polynomials), are continuous in \( M(\mathbb{N}) \).  
\end{lemma}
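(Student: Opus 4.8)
The plan is to invoke Theorem \ref{mainth}, which reduces continuity of a polynomial of the form (\ref{eq2}) to checking that $f^{-1}(\sigma_p)$ is always one of $\sigma_p$, $\mathbb{N}$, or $\emptyset$. For a constant polynomial $f(x)=a$, the preimage computation is completely explicit, so the verification should be short.

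First I would fix an arbitrary prime $p$ and compute $f^{-1}(\sigma_p)=\{x\in\mathbb{N}:\gcd(f(x),p)=1\}=\{x\in\mathbb{N}:\gcd(a,p)=1\}$. The key observation is that this condition on $x$ does not depend on $x$ at all, since $f(x)=a$ for every $x$. Hence the set is either all of $\mathbb{N}$ (when $p\nmid a$) or empty (when $p\mid a$). In either case $f^{-1}(\sigma_p)\in\{\mathbb{N},\emptyset\}$, which is one of the three allowed forms.

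Then I would simply apply the ``if'' direction of Theorem \ref{mainth} to conclude that $f$ is continuous in $M(\mathbb{N})$. Alternatively, one can give a one-line direct argument: the preimage of any basic open $\sigma_k$ under a constant map is either $\mathbb{N}$ or $\emptyset$ depending on whether $a\in\sigma_k$, hence open; but routing through Theorem \ref{mainth} keeps the exposition uniform with the later results.

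There is essentially no obstacle here: the only thing to be careful about is the trivial case distinction $p\mid a$ versus $p\nmid a$, and noting that $a=1$ (so that $a\in\sigma_p$ for all $p$) is covered by the first case. The lemma is really a sanity check that the characterization in Theorem \ref{mainth} behaves as expected on the simplest polynomials.
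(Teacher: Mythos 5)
Your proposal is correct and follows exactly the same route as the paper: compute $f^{-1}(\sigma_p)$ for an arbitrary prime $p$, note that it equals $\mathbb{N}$ when $\gcd(a,p)=1$ and $\emptyset$ when $p\mid a$, and conclude via the ``if'' direction of Theorem \ref{mainth}. No gaps.
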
  

\begin{proof}  
Consider a basic element \( \sigma_p \) with \( p \) a prime number. Note that if \( \gcd(a,p) = 1 \), then \( f^{-1}(\sigma_p) = \mathbb{N} \). Otherwise (if \( \gcd(a,p) > 1 \)), we have \( f^{-1}(\sigma_p) = \emptyset \). In either case, by Theorem \ref{mainth}, it follows that \( f \) is continuous in \( M(\mathbb{N}) \).  
\end{proof}

\begin{lemma}\label{lem5}  
The polynomials $f:\mathbb{N} \to \mathbb{N}$ of the form $f(x) = x^n$ are continuous in $M(\mathbb{N})$.  
\end{lemma}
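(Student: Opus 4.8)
The plan is to invoke the characterization just proved in Theorem \ref{mainth}, so that continuity of $f(x)=x^n$ reduces to understanding the single preimage $f^{-1}(\sigma_p)$ for an arbitrary prime $p$. Concretely, I would fix a prime $p$ and write
\begin{equation*}
f^{-1}(\sigma_p) = \{x \in \mathbb{N} : \gcd(x^n, p) = 1\} = \{x \in \mathbb{N} : x^n \not\equiv 0 \pmod{p}\}.
\end{equation*}

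The key step is the elementary observation that, because $p$ is prime, $p \mid x^n$ if and only if $p \mid x$ (this is Euclid's lemma applied $n-1$ times, or simply the fact that a prime dividing a product must divide one of the factors). Consequently $x^n \not\equiv 0 \pmod p$ is equivalent to $x \not\equiv 0 \pmod p$, i.e. to $\gcd(x,p)=1$, and therefore
\begin{equation*}
f^{-1}(\sigma_p) = \{x \in \mathbb{N} : \gcd(x,p) = 1\} = \sigma_p.
\end{equation*}

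Since this holds for every prime $p$, we are in the first case of Theorem \ref{mainth} for all primes, and the theorem immediately yields that $f(x)=x^n$ is continuous in $M(\mathbb{N})$. There is no real obstacle here: the only content beyond Theorem \ref{mainth} is the primality of $p$ forcing $p\mid x^n \iff p \mid x$, and the short proof writes itself once that identity is noted. (One could alternatively argue directly from the definition of the topology, noting that $\sigma_n \subseteq f^{-1}(\sigma_n)$ and $f(\sigma_n)\subseteq \sigma_n$ because $\gcd(x,n)=1 \Rightarrow \gcd(x^n,n)=1$, but routing through Theorem \ref{mainth} is cleaner.)
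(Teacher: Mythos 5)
Your proof is correct and is essentially identical to the paper's: both fix a prime $p$, observe that $\gcd(x^n,p)=1$ iff $\gcd(x,p)=1$ so that $f^{-1}(\sigma_p)=\sigma_p$, and conclude by Theorem \ref{mainth}. You simply spell out the Euclid's-lemma justification that the paper leaves implicit.
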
  

\begin{proof}  
Let $p$ be a prime number. Then,  
\begin{equation*}  
    f^{-1}(\sigma_p) = \{x \in \mathbb{N} : \gcd(x^n, p) = 1\} = \{x \in \mathbb{N} : \gcd(x, p) = 1\} = \sigma_p.  
\end{equation*}  
Thus, by Theorem \ref{mainth}, $f$ is continuous in $M(\mathbb{N})$.  
\end{proof}  

\begin{lemma}\label{lemextra}  
Let $f, g : \mathbb{N} \to \mathbb{N}$ be polynomials of the form (\ref{eq2}), either continuous or constant. Then, $h(x) := f(x) g(x)$ is continuous in $M(\mathbb{N})$.  
\end{lemma}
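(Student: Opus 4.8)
The plan is to use Theorem \ref{mainth}, which reduces the problem to checking that $h^{-1}(\sigma_p) \in \{\sigma_p, \mathbb{N}, \emptyset\}$ for every prime $p$. The key observation is that $\gcd(f(x)g(x), p) = 1$ if and only if $\gcd(f(x), p) = 1$ \emph{and} $\gcd(g(x), p) = 1$, since $p$ is prime. Translating this into preimages, I would write
\begin{equation*}
h^{-1}(\sigma_p) = \{x \in \mathbb{N} : \gcd(f(x)g(x), p) = 1\} = f^{-1}(\sigma_p) \cap g^{-1}(\sigma_p).
\end{equation*}

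Now I would invoke the hypothesis that each of $f$ and $g$ is either continuous or constant. If $f$ is continuous, then by Theorem \ref{mainth} we have $f^{-1}(\sigma_p) \in \{\sigma_p, \mathbb{N}, \emptyset\}$; if $f$ is constant, then (as in the proof of Lemma \ref{lem4}) $f^{-1}(\sigma_p) \in \{\mathbb{N}, \emptyset\}$. Either way, $f^{-1}(\sigma_p) \in \{\sigma_p, \mathbb{N}, \emptyset\}$, and likewise for $g^{-1}(\sigma_p)$. So it remains to check that this family of three sets is closed under intersection: since $\sigma_p \cap \sigma_p = \sigma_p$, $\sigma_p \cap \mathbb{N} = \sigma_p$, $\sigma_p \cap \emptyset = \emptyset$, $\mathbb{N} \cap \mathbb{N} = \mathbb{N}$, $\mathbb{N} \cap \emptyset = \emptyset$, and $\emptyset \cap \emptyset = \emptyset$, the intersection $h^{-1}(\sigma_p) = f^{-1}(\sigma_p) \cap g^{-1}(\sigma_p)$ again lies in $\{\sigma_p, \mathbb{N}, \emptyset\}$. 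Applying Theorem \ref{mainth} once more gives that $h$ is continuous in $M(\mathbb{N})$.

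There is essentially no hard step here; the only thing to be careful about is the reduction $h^{-1}(\sigma_p) = f^{-1}(\sigma_p) \cap g^{-1}(\sigma_p)$, which relies crucially on $p$ being prime (for a composite modulus the analogous identity fails), and the bookkeeping that a constant polynomial also satisfies the hypothesis of Theorem \ref{mainth}. I would also remark that $h$ is genuinely a polynomial of the form (\ref{eq2}), so that Theorem \ref{mainth} applies to it, and note that an obvious induction extends the result to any finite product of continuous-or-constant polynomials of the form (\ref{eq2}).
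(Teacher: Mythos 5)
Your proposal is correct and follows essentially the same route as the paper: compute $h^{-1}(\sigma_p) = f^{-1}(\sigma_p) \cap g^{-1}(\sigma_p)$, observe that each factor's preimage lies in $\{\emptyset, \sigma_p, \mathbb{N}\}$ (by Theorem \ref{mainth} or the constancy of the polynomial), note that this family is closed under intersection, and apply Theorem \ref{mainth} again. Your write-up is in fact slightly cleaner, since the paper's displayed identity contains a typo ($h^{-1}(\sigma_p)=f^{-1}(\sigma_p)\cap h^{-1}(\sigma_p)$ where $g^{-1}(\sigma_p)$ is intended) and you make explicit the role of primality in the factorization of the preimage.
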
  

\begin{proof}  
Let $p$ be a prime number. Note that $h^{-1}(\sigma_p)=f^{-1}(\sigma_p)\cap h^{-1}(\sigma_p)$. If $f$ and $g$ are constant, then  
\begin{equation*}  
    f^{-1}(\sigma_p), g^{-1}(\sigma_p) \in \{\emptyset, \mathbb{N} \}.  
\end{equation*}  
In this case, $h^{-1}(\sigma_p) \in \{\emptyset, \mathbb{N} \}$.  

On the other hand, suppose without loss of generality that $f$ is of the form (\ref{eq2}) and $g$ is constant. Then, by Theorem \ref{mainth},  
\begin{equation*}  
    f^{-1}(\sigma_p) \in \{\emptyset, \sigma_p, \mathbb{N} \} \quad \text{and} \quad g^{-1}(\sigma_p) \in \{\emptyset, \mathbb{N} \}.  
\end{equation*}  
In this case, $h^{-1}(\sigma_p) \in \{\emptyset, \sigma_p, \mathbb{N} \}$.  

Finally, if both $f$ and $g$ are of the form (\ref{eq2}) and continuous, then  
\begin{equation*}  
    f^{-1}(\sigma_p), g^{-1}(\sigma_p) \in \{\emptyset, \sigma_p, \mathbb{N} \}.  
\end{equation*}  
Thus, $h^{-1}(\sigma_p) \in \{\emptyset, \sigma_p, \mathbb{N} \}$. In any case, by Theorem \ref{mainth}, $h$ is continuous in $M(\mathbb{N})$.  
\end{proof}  


\begin{remark}
In general, since $M(\mathbb{N})$ is a topological semigroup (see \cite[Theorem 2.3]{MACIAS2024109070}), the product of two continuous functions on $M(\mathbb{N})$ results in another continuous function on $M(\mathbb{N})$.
\end{remark}

\begin{corollary}\label{cor2}  
The polynomials $f:\mathbb{N} \to \mathbb{N}$ of the form $f(x) = a x^n$ with $a \in \mathbb{N}$ are continuous in $M(\mathbb{N})$.  
\end{corollary}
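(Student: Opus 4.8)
The plan is to realize $f(x) = ax^n$ as a product of two polynomials that are already understood, and then invoke Lemma \ref{lemextra}. Concretely, write $f(x) = g(x)\,h(x)$ where $g(x) = a$ is the constant polynomial with value $a \in \mathbb{N}$ and $h(x) = x^n$. By Lemma \ref{lem4}, $g$ is continuous (indeed constant), and by Lemma \ref{lem5}, $h$ is continuous in $M(\mathbb{N})$. Both $g$ and $h$ are polynomials of the form (\ref{eq2}) — $g$ being constant and $h$ continuous — so Lemma \ref{lemextra} applies verbatim and yields that $f = gh$ is continuous in $M(\mathbb{N})$.

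Alternatively, one could argue straight from Theorem \ref{mainth} without appealing to the product lemma. Fix a prime $p$. Then
\begin{equation*}
f^{-1}(\sigma_p) = \{x \in \mathbb{N} : \gcd(ax^n, p) = 1\}.
\end{equation*}
If $p \mid a$, then $p \mid ax^n$ for every $x \in \mathbb{N}$, so $f^{-1}(\sigma_p) = \emptyset$. If $p \nmid a$, then $\gcd(ax^n, p) = 1$ if and only if $\gcd(x^n, p) = 1$, which (since $p$ is prime) holds if and only if $\gcd(x, p) = 1$; hence $f^{-1}(\sigma_p) = \sigma_p$. In either case $f^{-1}(\sigma_p) \in \{\emptyset, \sigma_p, \mathbb{N}\}$, and Theorem \ref{mainth} gives the continuity of $f$.

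There is no genuine obstacle here: the corollary is an immediate packaging of Lemmas \ref{lem4}, \ref{lem5}, and \ref{lemextra}, or equivalently a one-line computation feeding into Theorem \ref{mainth}. The only point meriting a moment's care is the elementary fact that, for a prime $p$, one has $\gcd(x^n, p) = 1$ exactly when $\gcd(x, p) = 1$, which is immediate from primality of $p$. I would therefore present the short argument via Lemma \ref{lemextra} as the main proof and leave the direct computation as a parenthetical remark.
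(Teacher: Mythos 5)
Your main argument is exactly the paper's: the paper proves the corollary by citing Lemma \ref{lemextra}, implicitly using the decomposition $ax^n = a \cdot x^n$ together with Lemmas \ref{lem4} and \ref{lem5}, which is what you spell out. Your alternative direct computation via Theorem \ref{mainth} is also correct, but the primary route coincides with the paper's proof.
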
  

\begin{proof}  
This follows from Lemma \ref{lemextra}.  
\end{proof}

The examples \ref{ejem1} and \ref{ejem2}, along with the results presented so far, motivate the following question:  

\begin{question}\label{ques1}  
Are the only continuous polynomials in $M(\mathbb{N})$ the constant polynomials and those of the form $ax^n$ with $a, n \in \mathbb{N}$?  
\end{question}  

The answer to Question \ref{ques1} is affirmative; see Theorem \ref{thcontinuo}. First, consider the following pair of lemmas.  

\begin{lemma}\label{lem7}  
Let $k \in \mathbb{N}$. Suppose $f:\mathbb{N} \to \mathbb{N}$ is a polynomial of the form (\ref{eq2}) that is discontinuous in $M(\mathbb{N})$. Then, $h(x):=x^k f(x)$ is also discontinuous in $M(\mathbb{N})$.  
\end{lemma}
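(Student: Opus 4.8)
The plan is to apply Theorem \ref{mainth} to $h$ by exhibiting a single prime $q$ for which $h^{-1}(\sigma_q)$ is neither $\emptyset$, nor $\sigma_q$, nor $\mathbb{N}$, which forces $h$ to be discontinuous. First I would unpack the hypothesis. Since constant polynomials are continuous (Lemma \ref{lem4}) and polynomials of the form $ax^n$ are continuous (Corollary \ref{cor2}), the discontinuity of $f$ means that $f$ is neither constant nor of the form $ax^n$. Writing $f(x)=x^{j}f_1(x)$ with $j\ge 0$ and $f_1(0)\ne 0$, this says precisely that $f_1$ is a \emph{non-constant} polynomial of the form (\ref{eq2}) whose constant term $c:=f_1(0)$ is a positive integer. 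Since $h(x)=x^{k+j}f_1(x)$ with $k+j\ge 1$, for every prime $q$ we have
\[
h^{-1}(\sigma_q)=\{x\in\mathbb{N}:q\nmid x^{k+j}f_1(x)\}=\{x\in\mathbb{N}:q\nmid x\ \text{and}\ q\nmid f_1(x)\}=\sigma_q\cap f_1^{-1}(\sigma_q),
\]
so the whole problem reduces to the behaviour of $f_1$ modulo primes.

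Next I would invoke Schur's theorem (Theorem \ref{schurtheorem}) for the non-constant polynomial $f_1$: the set $P$ of primes dividing some nonzero value $f_1(n)$, $n\in\mathbb{N}$, is infinite. As only finitely many primes are $\le\deg f_1+1$ or divide $c$, I can choose $q\in P$ with $q>\deg f_1+1$ and $q\nmid c$, and then check two things. First, $q\nmid c=f_1(0)$ forces the reduction of $f_1$ modulo $q$ to be a nonzero polynomial, hence to have at most $\deg f_1<q-1$ roots in the field $\mathbb{Z}/q\mathbb{Z}$; therefore some $s\in\{1,\dots,q-1\}$ satisfies $f_1(s)\not\equiv 0\pmod q$, so $s\in\sigma_q\cap f_1^{-1}(\sigma_q)$ and this set is non-empty. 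Second, $q\in P$ gives $n\in\mathbb{N}$ with $q\mid f_1(n)$ and $f_1(n)\ne 0$; since $q\nmid f_1(0)$ this forces $q\nmid n$, so $r:=n\bmod q$ lies in $\{1,\dots,q-1\}$ and $f_1(r)\equiv f_1(n)\equiv 0\pmod q$, whence $r\in\sigma_q\setminus f_1^{-1}(\sigma_q)$ and $\sigma_q\cap f_1^{-1}(\sigma_q)\ne\sigma_q$. Combined with the trivial inclusion $\sigma_q\cap f_1^{-1}(\sigma_q)\subseteq\sigma_q\subsetneq\mathbb{N}$, this gives $h^{-1}(\sigma_q)\notin\{\emptyset,\sigma_q,\mathbb{N}\}$, and Theorem \ref{mainth} then yields that $h$ is discontinuous.

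The step I expect to be the main obstacle is the choice of $q$, and the key realization behind it is that the prime witnessing the discontinuity of $f$ need not witness the discontinuity of $h$: it can happen that $f^{-1}(\sigma_p)=p\mathbb{N}$ for the bad prime $p$ (for instance $f(x)=x^{p-1}+(p-1)$, using Fermat's little theorem), and then $h^{-1}(\sigma_p)=\sigma_p\cap p\mathbb{N}=\emptyset$ is an \emph{open} set and produces no contradiction. This is exactly why Schur's theorem is needed: to produce a prime at which $f_1$ genuinely has a root among the units of $\mathbb{Z}/q\mathbb{Z}$, while the elementary degree count guarantees that at such a prime $f_1$ is not identically zero on those units. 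The only other point requiring a little care is the reduction in the first paragraph, where the hypothesis that $f$ is \emph{discontinuous} (rather than merely non-constant) is used, through Corollary \ref{cor2}, to guarantee that $f_1$ is non-constant and has a nonzero constant term.
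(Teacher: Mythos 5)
Your proof is correct, but it takes a genuinely different --- and more careful --- route than the paper's. The paper argues in one line: since $f$ is discontinuous, Theorem \ref{mainth} supplies a prime $p$ with $\emptyset \neq f^{-1}(\sigma_p)\cap\sigma_p \subsetneq \sigma_p$, and then $h^{-1}(\sigma_p)=\sigma_p\cap f^{-1}(\sigma_p)$ is a non-empty proper subset of $\sigma_p$, hence not open. As you observe, the first step of that argument is not justified as stated: Theorem \ref{mainth} only guarantees a prime $p$ with $f^{-1}(\sigma_p)\notin\{\emptyset,\sigma_p,\mathbb{N}\}$, and this does not exclude $f^{-1}(\sigma_p)=p\mathbb{N}$, in which case $h^{-1}(\sigma_p)=\sigma_p\cap p\mathbb{N}=\emptyset$ is open and that prime witnesses nothing for $h$; your example $x^{p-1}+(p-1)$ makes the point concretely. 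Your remedy --- factor $f=x^{j}f_1$ with $f_1(0)=c>0$, note that $f_1$ is non-constant because $f$ is not of the form $ax^n$ (Corollary \ref{cor2}), and use Schur's theorem together with the degree bound $\deg f_1<q-1$ to produce a prime $q$ at which $f_1$ vanishes at some unit modulo $q$ but not at all of them --- closes this gap, and in fact establishes the stronger fact that every discontinuous polynomial admits a prime $q$ with $\emptyset\neq f^{-1}(\sigma_q)\cap\sigma_q\subsetneq\sigma_q$, which is precisely what the paper's one-line argument presupposes. The cost is a heavier input (Schur's theorem, which the paper already invokes in Lemma \ref{lem8}); the benefit is that the lemma is proved in the generality in which it is stated, rather than only in the situation where it is applied in Theorem \ref{thcontinuo} (there $f=xg$ with $g$ as in Lemma \ref{lem8}, whose proof does supply a prime of the required kind). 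All the individual steps of your argument check out.
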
  

\begin{proof}  
Since $f$ is discontinuous in $M(\mathbb{N})$, by Theorem \ref{mainth}, there exists a prime number $p$ such that $\emptyset \neq f^{-1}(\sigma_p) \cap \sigma_p \subsetneq \sigma_p$. Then,  
\begin{equation*} 
 h^{-1}(\sigma_p)= \{x\in\mathbb{N}:\gcd(x^k,p)=1\}\cap f^{-1}(\sigma_p)=\sigma_p \cap f^{-1}(\sigma_p) \subsetneq \sigma_p, 
\end{equation*}  
which cannot be an open set by the same argument as in Theorem \ref{mainth}. Therefore, $h$ cannot be continuous in $M(\mathbb{N})$.  
\end{proof}  

\begin{lemma}\label{lem8}  
Let $f(x)$ be a polynomial of the form (\ref{eq2}) with $a_0 \neq 0$ and at least one $a_i \neq 0$ for some $i \in \{1,2,3,\dots, n\}$. Then, $f(x)$ is not continuous in $M(\mathbb{N})$.  
\end{lemma}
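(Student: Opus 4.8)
The plan is to use Theorem \ref{mainth} in its contrapositive form: it suffices to produce a single prime $p$ for which $f^{-1}(\sigma_p)$ is neither $\emptyset$, nor $\sigma_p$, nor $\mathbb{N}$. Recall that $f^{-1}(\sigma_p) = \{x \in \mathbb{N} : p \nmid f(x)\}$. Since by hypothesis $a_i \neq 0$ for some $i \in \{1,2,\dots,n\}$, the polynomial $f$ is non-constant, and since $f$ takes values in $\mathbb{N}$ every $f(n)$ is a positive integer; hence Schur's theorem (Theorem \ref{schurtheorem}) guarantees that infinitely many primes divide some value of $f$.

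Now $a_0$ is a fixed positive integer and thus has only finitely many prime divisors, so I can pick a prime $p$ with $p \nmid a_0$ and $p \mid f(n_0)$ for some $n_0 \in \mathbb{N}$. Because an integer polynomial respects congruences, writing $r$ for the residue of $n_0$ modulo $p$ gives $f(r) \equiv f(n_0) \equiv 0 \pmod p$ with $r \in \{0, 1, \dots, p-1\}$; and $r = 0$ is impossible, since then $0 \equiv f(n_0) \equiv a_0 \pmod p$ would contradict $p \nmid a_0$. Hence $r \in \{1, 2, \dots, p-1\} \subseteq \mathbb{N}$.

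From here the three exclusions are immediate. Since $\gcd(r, p) = 1$ we have $r \in \sigma_p$, whereas $p \mid f(r)$ gives $r \notin f^{-1}(\sigma_p)$; thus $\sigma_p \not\subseteq f^{-1}(\sigma_p)$, which rules out both $f^{-1}(\sigma_p) = \sigma_p$ and $f^{-1}(\sigma_p) = \mathbb{N}$. For non-emptiness, $f(p) \equiv a_0 \not\equiv 0 \pmod p$, so $f(p) \in \sigma_p$ and therefore $p \in f^{-1}(\sigma_p)$. Consequently $f^{-1}(\sigma_p) \notin \{\emptyset, \sigma_p, \mathbb{N}\}$, and Theorem \ref{mainth} yields that $f$ is not continuous in $M(\mathbb{N})$.

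The step I expect to be the crux is the appeal to Schur's theorem to secure a prime $p$ admitting a root of $f$ modulo $p$: a polynomial with nonzero constant term may well fail to have a root modulo every sufficiently large prime (for instance $x^2 + 1$), so merely counting the prime divisors of $a_0$ is not enough on its own — one really needs the infinitude of primes dividing the values of $f$. After that, discarding the finitely many primes dividing $a_0$ and checking the trichotomy in Theorem \ref{mainth} is routine.
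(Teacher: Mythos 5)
Your proof is correct and takes essentially the same route as the paper's: both invoke Schur's theorem (Theorem \ref{schurtheorem}) to produce a prime $p$ not dividing $a_0$ for which $f$ has a root among the reduced residues modulo $p$, and then conclude via Theorem \ref{mainth}. Your version is phrased directly rather than by contradiction (the paper chooses $p > \sum_{i=0}^n a_i$ instead of $p \nmid a_0$) and has the minor virtue of explicitly checking that $f^{-1}(\sigma_p) \neq \emptyset$, but the substance is identical.
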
  

\begin{proof}  
It suffices to show that there exists at least one prime number $p > \sum_{i=0}^{n} a_i$ such that $f(x) \equiv 0 \pmod{p}$ for some $x \in \{1,2,3,\dots,p-1,p\}$. In this case, necessarily $1 < x < p$ since $x=p$ can not be a solution of $f(x)\equiv 0 \pmod p$ because $\gcd(a_0,p)=1$. Then, by Theorem \ref{mainth}, $f(x)$ is not continuous in $M(\mathbb{N})$. 

Suppose that for every prime number $p > \sum_{i=0}^{n} a_i$, we have $f(x) \not\equiv 0 \pmod{p}$ for all $x \in \{1,2,\dots,p-1,p\}$.  Then, the set of prime numbers $p$ that divide some element of the set $\{f(n)\neq 0 : n \in \mathbb{N}\}$ is finite, which is a contradiction by Schur's theorem (Theorem \ref{schurtheorem}).
\end{proof}  



\begin{theorem}\label{thcontinuo}  
The only continuous polynomials in $M(\mathbb{N})$ are the polynomials the form $ax^n$ with $a\in \mathbb{N}$ and $n\in\mathbb{N}\cup\{0\}$.  
\end{theorem}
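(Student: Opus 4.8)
The plan is to combine the previously established lemmas into a clean dichotomy on the shape of $f$. Write $f(x)=\sum_{k=0}^n a_kx^k$ as in (\ref{eq2}). If $f$ is constant, then by Lemma \ref{lem4} it is continuous and of the form $ax^0$, so we are done. Assume henceforth that $f$ is non-constant, so $n\ge 1$ and $a_n\neq 0$. I would factor out the largest power of $x$ dividing $f$: let $k\ge 0$ be the largest integer such that $x^k\mid f(x)$, and write $f(x)=x^k g(x)$ where $g(x)=\sum_{j\ge 0} a_{j+k}x^j$ is a polynomial of the form (\ref{eq2}) with nonzero constant term $g(0)=a_k\neq 0$.

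Next I would split into two cases according to the degree of $g$. If $g$ is constant, say $g(x)=a_k=:a$, then $f(x)=ax^k$ with $a,k\in\mathbb{N}\cup\{0\}$ — but since $f$ is non-constant we have $k\ge 1$ — and this is continuous by Corollary \ref{cor2}; this is exactly one of the allowed forms. If $g$ is non-constant, then $g$ has nonzero constant term $a_k\neq 0$ and at least one further nonzero coefficient, so by Lemma \ref{lem8} the polynomial $g$ is discontinuous in $M(\mathbb{N})$. Then Lemma \ref{lem7} applies directly: $f(x)=x^k g(x)$ is discontinuous in $M(\mathbb{N})$ (if $k=0$ this is just $f=g$ discontinuous, and if $k\ge 1$ it is the content of Lemma \ref{lem7}). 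Hence in this case $f$ is not continuous, which shows that every continuous polynomial must fall into one of the two previous cases, i.e. must be of the form $ax^n$ with $a\in\mathbb{N}$, $n\in\mathbb{N}\cup\{0\}$.

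Finally, the converse direction — that every polynomial of the form $ax^n$ with $a\in\mathbb{N}$, $n\in\mathbb{N}\cup\{0\}$ is continuous — is already covered: $n=0$ is Lemma \ref{lem4} and $n\ge 1$ is Corollary \ref{cor2}. Combining the two directions gives the characterization.

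There is essentially no hard analytic obstacle left at this stage, since the substantive number theory (Schur's theorem and Dirichlet's theorem) has been absorbed into Lemmas \ref{lem7} and \ref{lem8} and Theorem \ref{mainth}. The main point requiring care is purely bookkeeping: one must make sure the factorization $f(x)=x^k g(x)$ is set up so that $g$ is genuinely of the form (\ref{eq2}) with $g(0)\neq 0$, so that Lemma \ref{lem8} is applicable, and one must handle the boundary case $k=0$ (where $f$ itself already has nonzero constant term and Lemma \ref{lem8} applies without invoking Lemma \ref{lem7}) separately from $k\ge 1$. I expect the write-up to be short.
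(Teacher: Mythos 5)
Your proposal is correct and follows essentially the same route as the paper: reduce to the non-monomial case, factor $f(x)=x^k g(x)$ with $g(0)\neq 0$, and conclude by combining Lemma \ref{lem8} (discontinuity of $g$) with Lemma \ref{lem7} (discontinuity is preserved under multiplication by $x^k$). Your version is in fact slightly more careful than the paper's, which writes $f(x)=xg(x)$ and tacitly assumes $a_1\neq 0$; extracting the \emph{maximal} power $x^k$ so that $g$ genuinely has nonzero constant term, and treating $k=0$ separately, is the right way to make that step airtight.
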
  

\begin{proof}  
By Lemma \ref{lem4}, Corollary \ref{cor2}, and Lemma \ref{lem8}, it suffices to show that polynomials $f(x)$ of the form (\ref{eq2}) with $a_0 = 0$ and $a_i, a_j \neq 0$ for some distinct pair $i, j \in \{1,2,3,\dots, n\}$ are discontinuous in $M(\mathbb{N})$. 

Let $f(x)$ be a polynomial of the previously described form. In this case, we can write $f(x) = x g(x)$, where $g(x)$ is a polynomial of the form described in Lemma \ref{lem8}. Thus, by Lemma \ref{lem7}, we conclude that $f(x)$ is not continuous in $M(\mathbb{N})$.  
\end{proof}

\section{Final comment}  

The statement of Theorem \ref{mainth} can be extended to functions $f:\mathbb{N}\to\mathbb{N}$ such that if $x\equiv y \mod n$ then $f(x)\equiv f(y) \mod n$ for $n,x,y\in \mathbb{N}$. 



\begin{theorem}\label{thfinal}
Let $f:\mathbb{N}\to\mathbb{N}$ be a function such that if $x\equiv y \mod n$ then $f(x)\equiv f(y) \mod n$ for $n,x,y\in \mathbb{N}$. Then $f$ is continuous in $M(\mathbb{N})$ if and only if $f^{-1}(\sigma_p) = \sigma_p$, $f^{-1}(\sigma_p) = \mathbb{N}$, or $f^{-1}(\sigma_p) = \emptyset$ for every prime number $p$. 
\end{theorem}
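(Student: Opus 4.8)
\emph{Proof proposal.} The plan is to re-run the proof of Theorem~\ref{mainth} essentially verbatim, replacing every appeal to the polynomial form of $f$ by the stated hypothesis that $f$ is congruence-preserving. The guiding observation is that the only structural feature of polynomials exploited in that proof is the fact that $f(m)\bmod n$ depends only on $m\bmod n$, which is exactly the present hypothesis (taken with $n=p$ a prime).

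First I would dispose of the easy implication, which in fact uses nothing about $f$ at all beyond $\sigma_{nm}=\sigma_n\cap\sigma_m$. If $f^{-1}(\sigma_p)$ equals $\sigma_p$, $\mathbb{N}$, or $\emptyset$ for every prime $p$, then $f^{-1}(\sigma_1)=\mathbb{N}$ and, for $k>1$ with prime factorization $k=p_1^{\alpha_1}\cdots p_t^{\alpha_t}$, we have $f^{-1}(\sigma_k)=f^{-1}\big(\bigcap_{i}\sigma_{p_i}\big)=\bigcap_i f^{-1}(\sigma_{p_i})$, a finite intersection of open sets; hence $f$ is continuous. This is literally the first half of the proof of Theorem~\ref{mainth}.

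For the converse, suppose $f$ is continuous, so each $f^{-1}(\sigma_p)=\{x\in\mathbb{N}:f(x)\not\equiv 0\pmod p\}$ is open, and suppose toward a contradiction that for some prime $p$ this set is nonempty, different from $\sigma_p$, and different from $\mathbb{N}$. Using that $m\mapsto f(m)\bmod p$ factors through $m\bmod p$: if $f(x)\not\equiv 0\pmod p$ for all $x\in\{1,\dots,p-1\}$, then every $m$ coprime to $p$ lies in $f^{-1}(\sigma_p)$, so $\sigma_p\subseteq f^{-1}(\sigma_p)$, and since equality fails this forces $f^{-1}(\sigma_p)\cap p\mathbb{N}\neq\emptyset$ (because $\mathbb{N}\setminus\sigma_p=p\mathbb{N}$); but all multiples of $p$ are congruent to $0$ modulo $p$, so congruence-preservation propagates membership and yields $p\mathbb{N}\subseteq f^{-1}(\sigma_p)$, whence $f^{-1}(\sigma_p)\supseteq\sigma_p\cup p\mathbb{N}=\mathbb{N}$, a contradiction. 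Therefore there is some $x\in\{1,\dots,p-1\}$ with $f(x)\equiv 0\pmod p$; then for every $m\equiv x\pmod p$ we get $f(m)\equiv f(x)\equiv 0\pmod p$, i.e. $f^{-1}(\sigma_p)\cap\big(p(\mathbb{N}\cup\{0\})+x\big)=\emptyset$. Since $\gcd(x,p)=1$, Theorem~\ref{thdirichlet} together with Lemma~\ref{mainlema} makes $p(\mathbb{N}\cup\{0\})+x$ dense in $M(\mathbb{N})$, so it must meet the nonempty open set $f^{-1}(\sigma_p)$ — contradiction. This gives the claimed trichotomy.

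I do not expect a genuine obstacle: the statement is really a matter of re-reading Theorem~\ref{mainth} and noticing that Schur's theorem (Theorem~\ref{schurtheorem}) played no role there, only Dirichlet's theorem and Lemma~\ref{mainlema} did. The one point that needs care is to pin down precisely where the polynomial assumption entered the original argument and to confirm that congruence-preservation — needed here only for the modulus $n=p$, and only for the residue classes of $x$ and of $0$ — suffices at each of those points; this is routine once the argument is organized as above.
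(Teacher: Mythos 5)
Your proposal is correct and is exactly what the paper intends: its own proof of Theorem~\ref{thfinal} consists of the single remark that the converse argument of Theorem~\ref{mainth} can be repeated because the congruence-preservation hypothesis supplies the only property of polynomials used there (that $f(m)\bmod p$ depends only on $m\bmod p$). You have simply written out in full the verification the paper leaves implicit, and each step checks out.
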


\begin{proof}
Similar to the proof of Theorem \ref{mainth}. Note that the fact that $f$ is a function such that if $x\equiv y \mod n$ then $f(x)\equiv f(y) \mod n$ for $n,x,y\in \mathbb{N}$ allows us to repeat the argument of the converse of the proof of Theorem \ref{mainth}. 
\end{proof}

For example, functions $f:\mathbb{N}\to\mathbb{N}$ of the form $f(x)=a^x+b$ with $a,b\in\mathbb{N}$ and $a\neq b$ are not continuous in $M(\mathbb{N})$ by applying the theory of \textit{primitive roots} $\pmod p$ with $p$ a prime number and Theorem \ref{thfinal}. Of course, if we take a sufficiently large prime number $p$ ($>a,b$) such that $b\not\equiv 1 \pmod p$, then $f(x)\equiv 0 \pmod p$ would have a unique nontrivial solution ($x\not\equiv0 \pmod p)$ in the complete residue system $\pmod p$, so that $f^{-1}(\sigma_p)\notin\{\emptyset,\sigma_p,\mathbb{N}\}$ and therefore $f$ would not be continuous in $M(\mathbb{N})$. However, functions $f:\mathbb{N}\to \mathbb{N}$ of the form $a^x$ with $a\in\mathbb{N}$ would be continuous in $M(\mathbb{N})$, indeed, in this case, it is easy to verify that for any prime number $p$, we would have $f^{-1}(\sigma_p)\in\{\emptyset,\mathbb{N}\}$. In summary, the only \textit{exponential functions} that are continuous in $M(\mathbb{N})$ are those of the form $a^x$ with $a\in\mathbb{N}$.  

With the previously mentioned discussion, we aim to motivate the study of continuous functions in general over $M(\mathbb{N})$, perhaps conducting a study similar (as far as possible) to the one carried out by T. Banakh, J. Mioduszewski, and S. Turek in \cite{banakh2017continuous} on continuous functions in general over the Golomb topological space.

It is interesting that in the Golomb topological space, there are \textit{more} continuous polynomials than in the Macías topological space. However, in the Macías topological space, there are \textit{more} homeomorphisms than in the Golomb topological space; see \cite{banakh2019golomb, macias2024self}.

\printbibliography
\end{document}